\documentclass[11pt,reqno]{amsart}
\setlength{\voffset}{-.25in}
%\hyphenation{Fibonacci}
%\sloppy
\usepackage{amssymb,latexsym}
\usepackage{graphicx}
\usepackage{comment}
\usepackage{url}		%does nice formatting of URLs
\usepackage{color}
\usepackage{hyperref}
\usepackage{url}
\usepackage{breakurl}
\newcommand{\bburl}[1]{\textcolor{blue}{\url{#1}}}

\textwidth=6.175in
\textheight=9.0in
\headheight=13pt
\calclayout

\makeatletter
\newcommand{\monthyear}[1]{%
  \def\@monthyear{\uppercase{#1}}}
\newcommand{\volnumber}[1]{%
  \def\@volnumber{\uppercase{#1}}}
\AtBeginDocument{%
\def\ps@plain{\ps@empty
  \def\@oddfoot{\@monthyear \hfil \thepage}%
  \def\@evenfoot{\thepage \hfil \@volnumber}}
\def\ps@firstpage{\ps@plain}
\def\ps@headings{\ps@empty
  \def\@evenhead{%
    \setTrue{runhead}%
    \def\thanks{\protect\thanks@warning}%
    \uppercase{The Fibonacci Quarterly}\hfil}%
  \def\@oddhead{%
    \setTrue{runhead}%
    \def\thanks{\protect\thanks@warning}%
    \hfill\uppercase{Hypergeometric Template}}%
  \let\@mkboth\markboth
  \def\@evenfoot{%
    \thepage \hfil \@volnumber}%
  \def\@oddfoot{%
    \@monthyear \hfil \thepage}%
  }%
\footskip=25pt
\pagestyle{headings}%
}
\makeatother

%$\boldsymbol{F_n}$
%sometimes 85--90 needs to be written as 85\textendash90

\theoremstyle{plain}
\numberwithin{equation}{section}
\newtheorem{thm}{Theorem}[section]
\newtheorem{theorem}[thm]{Theorem}

\newtheorem{lem}[thm]{Lemma}

\newtheorem{proposition}[thm]{Proposition}

\begin{document}
%% replace the values in the next three lines by the correct information
\monthyear{Month Year}
\volnumber{Volume, Number}
\setcounter{page}{1}

\title{An Invitation to Fibonacci Digits}
\author{Justin Cheigh, Guilherme Zeus Dantas e Moura, Jacob Lehmann Duke, Annika Mauro, Zoe McDonald, Anna Mello, Kayla Miller, Steven J. Miller, Santiago Velazquez Iannuzzelli}

\address{Department of Mathematics and Statistics\\
                Williams College\\
                Williamstown, MA\\}
\email{jhc5@williams.edu}

\address{Department of Mathematics and Statistics\\
    Haverford College\\
    Haverford, PA,}
\email{zeusdanmou@gmail.com}

\address{Department of Mathematics and Statistics\\
                Williams College\\
                Williamstown, MA\\}
\email{jl34@williams.edu}

\address{Department of Mathematics\\
    Stanford University\\
    Stanford, CA}
\email{amauro@stanford.edu}

\address{Department of Mathematics and Statistics\\
                Boston University\\
                Boston, MA\\}
\email{zmcd@bu.edu}

\address{Lanesborough Elementary School\\
            Lanesborough, MA\\}
\email{amello@lanesboroughschool.org}

\address{Department of Mathematics and Statistics\\
                Williams College\\
                Williamstown, MA\\}
\email{km30@williams.edu}

\address{Department of Mathematics and Statistics\\
                Williams College\\
                Williamstown, MA\\}
\email{sjm1@williams.edu}

\address{Department of Mathematics\\
                University of Pennsylvania\\
                Philadelphia, PA\\}
\email{smvelian@sas.upenn.edu}

\thanks{This research was supported by NSF Grant DMS1947438 and Williams College. We are grateful to numerous colleagues over the years for many pedagogical conversations, to the students of the Mount Greylock Regional School District (especially many classes at Lanesborough Elementary), to Karyn McLellan and the referee for an incredibly detailed reading of an earlier draft and many suggestions which improved the exposition. We also thank OIT at Williams College in general, and Matthew Baya and Tattiya Maruco in particular, for computer support for this and related projects over the years.}

\begin{abstract} The purpose of this short note is to show the interplay between math outreach and conducting original research, in particular how each can build off the other.
\end{abstract}

 \maketitle

%%%%%%%%%%%%%%%%%%%%%%%%%%%%%%%%%%%%%%%%%%%%%%%%%%%%%%%%%%%%%%%%%%%%%%%%%%%%%%%%%%%%%%
%%%%%%%%%%%%%%%%%%%%%%%%%%%%%%%%%%%%%%%%%%%%%%%%%%%%%%%%%%%%%%%%%%%%%%%%%%%%%%%%%%%%%%
%%%%%%%%%%%%%%%%%%%%%%%%%%%%%%%%%%%%%%%%%%%%%%%%%%%%%%%%%%%%%%%%%%%%%%%%%%%%%%%%%%%%%%
\section{Fibonacci Outreach in Elementary Schools}

This topic is extremely near and dear to our hearts: math outreach. Specifically, engaging students and educators of all ages and levels. There are many items we could discuss; in the interest of space we focus on an activity that three of us have done numerous times at several grades in the Mount Greylock Regional School District (especially in the Lanesborough Elementary School). We've gone all the way down to kindergarten classes and all the way up to sixth grade, and it's absolutely amazing to see how engaged the kids can be, and how energized the teachers become. After describing some of our classroom activities, we pivot to the other side of the coin: how outreach activities lead to new research. This is an excellent example of multi-tasking, which is especially important given all the demands on our time.

%So what I want to do here is I want to discuss efforts to engage and excite young kids and in particular I want to highlight the interplay of math outreach and math research. I think one of the skills I have tried to teach my students over the years is how to multitask and get credit from at least three or four different groups for the same thing that you're doing, and so I can play with my kids, help the school districts, get math papers out of it, and make a conference presentation.

It's valuable to learn how to look at problems the right way and from the right perspective. Kids are good at solving problems that are similar to something they have seen, but what happens when they meet something new? We have found students of all ages rise to the challenge, and love the sense of discovery and ownership that develops. We thus spend a lot of time talking about how important it is to look at items the right way. For example, consider the two Rubik's ``cubes'' below (see Figure \ref{fig:twocubes}).

\begin{figure}[h]
\begin{center}
\scalebox{.8}{\includegraphics{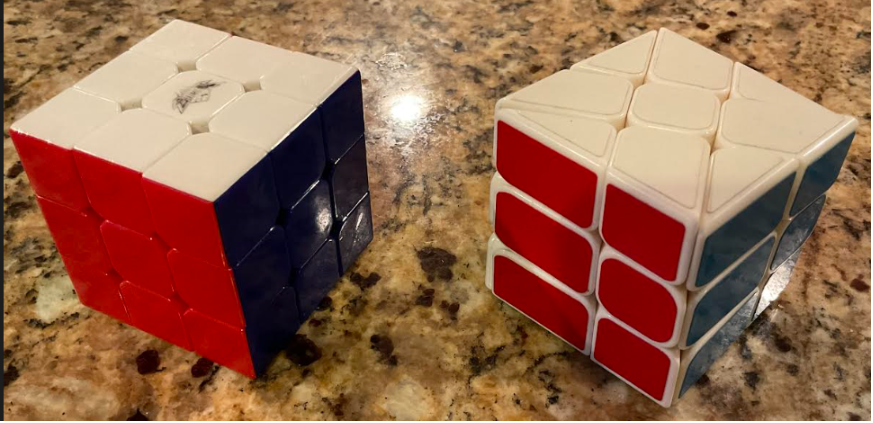}}
\caption{\label{fig:twocubes} A standard and a very non-standard Rubik's cube!}
\end{center}\end{figure}

At first glance the two look very different. The standard one has six sides of different colors, each with nine pieces, while the other also has six differently-colored sides, only two have nine pieces while the other four are $2 \times 3$ (unequally sized pieces) rather than $3 \times 3$. Yet, amazingly, if we look at the second cube from the right perspective (see Figure \ref{fig:twocubes2}) we see it's almost the same as the first. Specifically, if we hold it at a 30 degree angle the four sides that are $2 \times 3$ become sides that are $3 \time 3$! For example, the front face is not a red side, but rather a red side running into blue. This is a great example, easy for students to see, of how valuable it is to find the right vantage point for a problem: in viewing the non-standard Rubik's cube like the original, we can use the known solution to help us solve this new problem.

\begin{figure}[h]
\begin{center}
\scalebox{.8}{\includegraphics{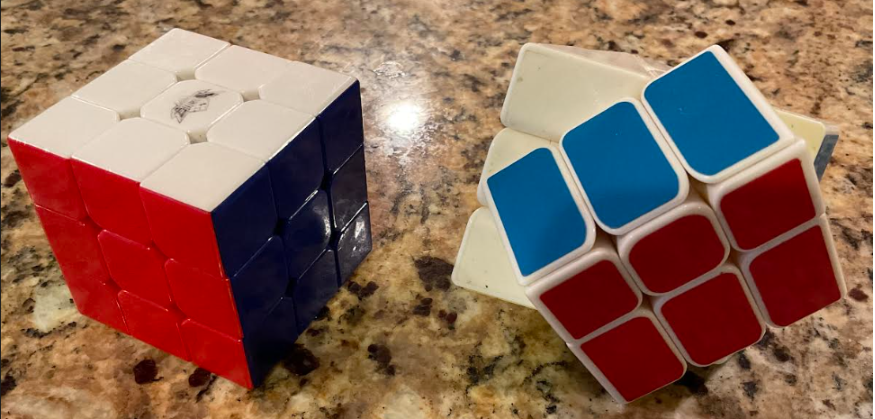}}
\caption{\label{fig:twocubes2} Viewing the non-standard cube from an angle reveals a similarity to the standard cube.}
\end{center}\end{figure}

%The power of asking questions: we've seen that in a lot of talks today, if you can convert something to a combinatorial question and you can start counting that way, you have lots of powerful techniques. I often talk about Rubik's Cubes. These two Rubik's cubes are actually equivalent if you know how to look at them. I'm happy to talk more about that later.

The following activity is related to the lesson from the cube. While when we go to schools to play math games we cannot lie, we can of course deliberately mislead (so long as we come clean in the end). Here is what we call the \emph{I Love Rectangles Game}. \\ \

\noindent \textbf{I Love Rectangles Game:} \emph{You have infinitely many squares of distinct size: one each of a $1\times 1$, $2\times 2$, $3\times 3$, and $4\times 4$, and so on (see Figure \ref{fig:squares}). The goal is to put as many of them down on the table as possible, subject to some constraints. You can't put one square on top of another, and we just love rectangles so we will not consider any other shape. Thus, however many squares are on the table, they must be connected and form a rectangle. How many squares can be so placed?}

\begin{figure}[h]
\begin{center}
\scalebox{.8}{\includegraphics{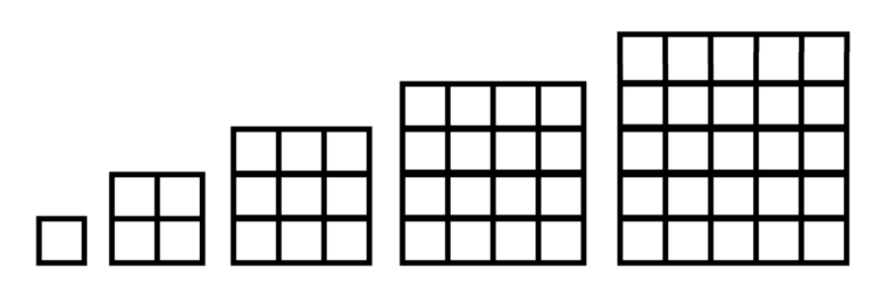}}
\caption{\label{fig:squares} The first few pieces for the \emph{I Love Rectangles} game.}
\end{center}\end{figure}

\ \\

%This is like the Final Ground problems from calculus: he will only consider square or rectangular pens for his cows, which is important in Williamstown where I'm from. If you could put two squares down and keep it as a rectangle, you win.

When we go to classrooms, we put one of the squares down, say we have a rectangular shape (as every square is a rectangle), and we're done. Students are then offered \$20 if they can place two down. They are very excited and engaged! They try many different approaches, physically moving the squares along and trying to combine. We normally break the class (depending on size) into groups of 2 to 4 students and we bring in plastic models / LEGO models so they can get their hands on the math.

%I have the kids try to do it, and so what I have brought here is a bunch of different ways we try to do it. For kids, you want them to be active and you want them to try to do this. I have worked with people at Williams to make little plastic squares of different sizes so that we have a bunch of these so every group of like three or four kids gets these and they physically start moving them around. I also of course love to play with Legos and so we have made these out of Legos as well so the kids can play around with these.

After a while, kids realize that the game is rigged and it is impossible to choose any two squares from those in front of them to make a rectangle. (If we've gone to their classroom many times before this activity, they are frequently suspicious of any game we introduce.) Let's make it more interesting now. We're cheap, so let's add the fewest pieces possible. We have to give another piece because as you cannot have two squares form a rectangle if they have unequal sides. We'll add just one more square, and as we're cheap we'll do the smallest size square we can: another $1 \times 1$. So now we have two of those.

The kids now need to figure out what to do. We have yet to go to a classroom where students haven't realized that they must start by putting the two $1 \times 1$ squares together; if we don't use the new piece, we're in the old case and we know there is nothing that can be done. This is a great way to talk about using the givens of a problem.

The kids very quickly start building at this point. They put down a $1 \times 1$, put down another $1 \times 1$, then a $2\times 2$, a $3\times 3$, a $5\times 5$, and so on, and they start seeing the pattern emerge. It's amazing -- we've never done this where even one group has failed to discover the Fibonacci numbers. They realize they can't use the $4\times4$ and after the $5\times 5$, the next one they need to use is the $8\times 8$ (see Figure \ref{fig:squares2}). As young as kindergarten, they're figuring out the patterns and it's just wonderful to see them doing this. What's nice is that there are many different paths we can take, depending on their grade and interest. For more advanced classes, we can talk about proofs of sums of squares and products of Fibonacci numbers and much, much more.\footnote{For example, the tiling gives a geometric proof that $F_1^2 + F_2^2 + \cdots + F_n^2 = F_n F_{n+1}$ by calculating the area of the $F_n \times F_{n+1}$ rectangle two different ways: length times width, and the sum of the area of the squares making it up.}

\begin{figure}[h]
\begin{center}
\scalebox{.8}{\includegraphics{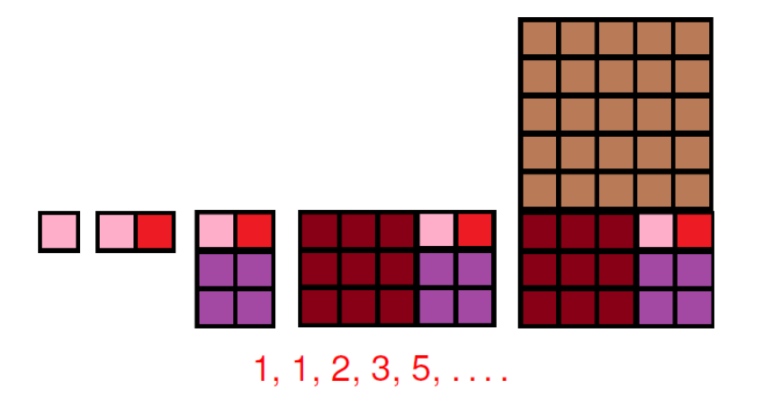}}
\caption{\label{fig:squares2} The consequences of adding an additional $1 \times 1$ piece for the \emph{I Love Rectangles} game.}
\end{center}\end{figure}

After our success in building arbitrarily large rectangles, we talk about what freedom is allowed here. After the first tile, we actually have four places to put the second $1 \times 1$ square, and then after that from that point on, we always have exactly two locations. How should we choose? This conversation flows to discussing the Fibonacci spiral. There's a beautiful video \emph{Nature by Numbers} (see \bburl{https://youtu.be/kkGeOWYOFoA}), which shows many wonderful applications of the Fibonacci sequence.

%This is just some of the stuff I brought. I was told that there are severe weight limits in terms of baggage; we actually extended this and we did the next one, the $55 \times 55$, but I was very worried about the weight limit, so I only brought this one.

Figure \ref{fig:kidsplay} shows some kids playing and discovering the Fibonacci numbers. You can see them talking to each other as they try to put together the various pieces to make a square. This is step one in our visit. Then we ask them what they observe. They write the numbers on the board and they start trying to determine patterns. Very quickly they notice that the terms go odd, odd, even. Usually someone then asks whether it's always going to be odd, odd, even. Frequently several kids notice that each one seems to be the sum of the previous two. This is an excellent opportunity to introduce the idea of mathematical notation (or even better, \emph{good} mathematical notation). For many, this is the first time they've had to come up with a \emph{name} for a sequence, and a way to refer to a general term.

\begin{figure}
\begin{center}
\scalebox{1}{\includegraphics{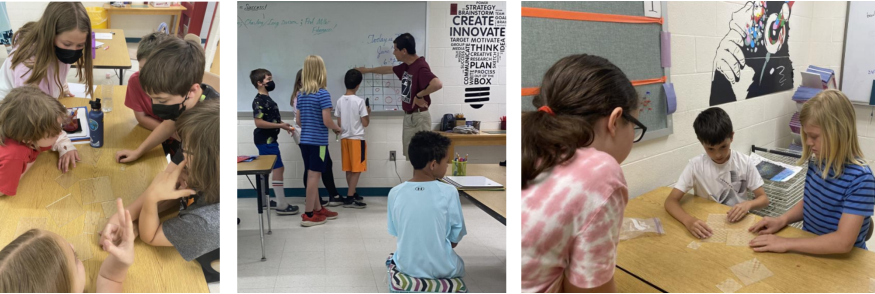}}
\caption{\label{fig:kidsplay} Fourth graders at Lanesborough Elementary discovering the Fibonacci numbers.}
\end{center}\end{figure}

\ \\

Here are two representative comments from recent fourth graders.

\begin{itemize}

\item The pattern goes even even odd for every sequence if you start with 1, 1, or any odd numbers. If you start with an even number second like 3, 2, the sequence is shaken up so it goes odd, even, odd, but after that it goes back to the other odd odd even sequence. If you start with two even numbers, it will always be even. If you make a diagram it goes in a shell that matches a nautilus shell. --William.\\ \

\item Why is the Fibonacci sequence everywhere? What is the purpose of the Fibonacci sequence? Why do the Fibonacci sequence numbers sound so good on instruments? Can the Fibonacci sequence be found in other shapes than a swirl? Can Fibonacci be found in technology like phones, laptops, or TVs? --Maddie. \\ \

\end{itemize}

%It's just wonderful seeing them exploring mathematics and trying to think of what's going on.

%As an aside, I visited this fourth grade class a lot of times and they got a little bit tired of how all the games were rigged in my favor, so they decided that they were going to rig a game in their favor against me. They had strategy sessions, and it was just absolutely wonderful to see them so engaged. If they hadn't made a mistake, they actually had a game where they were going to win, but they fortunately made a mistake, and so my record was safe and I refused a rematch.

%Now again, one of the things I really want to emphasize is depending on your audience, you can do this at many different levels, exploring some of the things we take for granted. For example, it's an opportunity to introduce the kids to new notation, such as how to denote an arbitrary element of a sequence. For many of the kids, this is the first time where they've actually come across the idea of giving terms names, and we can talk about what would be a good name. Why, for example, do you think we're using the letter F for the Fibonaccis? That could be a dangerous question to ask with your kids, but hopefully they note that it's for `Fibonacci,' and then learn to notate other things like this.

\ \\

These two activities worked well with older elementary school kids. We've had competitions to see who can calculate the largest Fibonacci number: it's a Fibonacci race! It's interesting to see what strategy is best to adopt: do you just try to do it as fast as possible, but with the risk that once you make a mistake, odds are you're out as it's unlikely you'll correct, or do you go a little bit slower and be more careful? This is a great opportunity to talk to kids about spreadsheets and explore how you would use something like that technology as a tool to assist you if you want to do really large calculations.  We then launch a Google Sheet and introduce some basic commands, such as how to fill down and how to define formulas. For most of the kids, this is the first time they've really thought about formulas and gathering data.

%%%%%%%%%%%%%%%%%%%%%%%%%%%%%%%%%%%%%%%%%%%%%%%%%%%%%%%%%%%%%%%%%%%%%%%%%%%%%%%%%%%%%%
%%%%%%%%%%%%%%%%%%%%%%%%%%%%%%%%%%%%%%%%%%%%%%%%%%%%%%%%%%%%%%%%%%%%%%%%%%%%%%%%%%%%%%
%%%%%%%%%%%%%%%%%%%%%%%%%%%%%%%%%%%%%%%%%%%%%%%%%%%%%%%%%%%%%%%%%%%%%%%%%%%%%%%%%%%%%%
\section{Research inspired by Fibonacci Outreach}

Each summer Williams College provides research opportunities to college mathematics students through the SMALL program (the letters come from the first letters of the last names of the founding five professors; see \bburl{http://math.williams.edu/small/} for more information). It's run for almost 40 years, and in addition to research the students are frequently involved in outreach activities.

In Summer '22, S. Miller invited his current SMALL research students to work on math activities to be used with students in elementary schools. As an exercise to get them thinking from the perspective of young kids, Miller asked the group to tell him about patterns they noticed in the Fibonacci numbers upon first glance. He expected his students to comment on the well-known pattern of the Fibonacci numbers alternating odd, odd, even, but was instead (pleasantly!) surprised when one of his students, Cheigh, asked the following: does every Fibonacci number have either a $1, 2, 3, 5, $ or $8$ as one of its digits? Of course, $\{1, 2, 3, 5, 8\}$ must be a subset of the minimal set of digits needed to have an element in each Fibonacci number, since these digits are exactly the single-digit Fibonacci numbers. The group became curious and discussed several generalizations.\\ \

\begin{itemize}

\item For a given base $B$, for what fixed subsets of $\{0, \dots, B-1\}$  must all Fibonacci numbers have at least one member as a digit? \\ \

\item Reconsider the previous problem, but now allow finitely many exceptions. In other words, for what subsets will all sufficiently large Fibonacci numbers have at least one of these as a digit? \\ \

\item While problems such as the above are often very difficult, as they are deterministic, frequently a random probabilistic model is easier to analyze and can build intuition (see for example \cite{kontlag}). What do such models say about these questions? \\ \

\end{itemize}

As a professor teaching students how to participate in math outreach (as well as doing research themselves), Miller was careful not to involve himself in investigating these questions, despite his interest in the problems! The students quickly came up with a partial solution to the original question posed by Cheigh. We sketch their solution below, and provide a complete proof in Section \ref{sec3}.

We attack this problem by brute force via computer calculations; we label the terms of the Fibonacci sequence by $F_1 = 1, F_2 = 1, F_3 = 2$ and so on. The last $d$ digits of the Fibonacci sequence are periodic (we'll prove this shortly in Lemma \ref{lem:pisano}, but for now let's just assume this result and see explore the consequences\footnote{If we look at the Fibonacci numbers modulo 10, we get $\{$1, 1, 2, 3, 5, 8, 3, 1, 4, 5, 9, 4, 3, 7, 0, 7, 7, 4, 1, 5, 6, 1, 7, 8, 5, 3, 8, 1, 9, 0, 9, 9, 8, 7, 5, 2, 7, 9, 6, 5, 1, 6, 7, 3, 0, 3, 3, 6, 9, 5, 4, 9, 3, 2, 5, 7, 2, 9, 1, 0, 1, 1, 2, 3, 5, $\dots\}$, and we see that the last digit is periodic with period 60; we leave to the reader the fun exercise of finding the period of the last two digits!}). Hence it suffices to check that finitely many Fibonacci numbers include an element from a particular set as one of their last $d$ digits. So far, we've proven that if you omit a 6, every Fibonacci number has one of the remaining digits, 0, 1, 2, 3, 4, 5, 7, 8, or 9. This means that every Fibonacci number contains an element of the set $\{0, 1, 2, 3, 4, 5, 7, 8, 9\}$. If we instead omit a 4, we have a similar result. If we omit a 2, for all Fibonacci numbers sufficiently large, this argument still works. Note it cannot be true for all Fibonacci numbers as $F_2 = 2$, but this is the only problem -- thus 3 is sufficiently large!

%list = {};
%For[n = 1, n <= 100, n++, list = AppendTo[list, Mod[Fibonacci[n], 10]]]
%Print[list]

%\footnote{This is a famous result: given any positive integer $n$ the Fibonacci numbers modulo $n$ are periodic; the period is denoted $\pi(n)$ and called the Pisano period. One can see this by using the Pigeonhole Principle. If we look at the $n^2+1$ pairs $(F_1, F_2), \ (F_2, F_3), \ \dots, (F_{n^2}, F_{n^2+1}),\ (F_{n^2+1}, F_{n^2+2})$ modulo $n$, at least two of them must be the same as there are only $n^2$ possible pairs of integers modulo $n$. Thus say $(F_i, F_{i+1})$ and $(F_j, F_{j+1})$ are the same modulo $n$. Since the Fibonacci numbers are completely determined from two consecutive terms, the values modulo $n$ are the same going forward from either $F_i$ or $F_j$.} 

Unfortunately, though, some very large Fibonacci numbers don't have a $1, 2, 3, 5$ or $8$ in their last $d$ digits. Take, for example $F_{21} = 10946$. The first time an element of $\{1, 2, 3, 5, 8\}$ appears is in the fifth-from-last position. For all Fibonacci numbers smaller than $F_{21}$, the furthest a $1, 2, 3, 5$ or $8$ appears is in the third-from-last position. This means that under our brute force method, $F_{21}$ forces us to increase our search from checking the last three digits of every Fibonacci through $F_{15\cdot{10}^2}$ to checking the last five digits through $F_{15\cdot{10}^4}$ as per Theorem \ref{thm:jarden} below. This issue of expanding searches persists with higher Fibonacci numbers: the last 19 digits of $F_{300}$, \begin{center} $F_{300} \ = \ 22223224462942044552973989346190996720666693\textbf{9096499764990979600}$, \end{center} do not contain a $1, 2, 3, 5$ or $8$.

Of course, we see that lots of digits of $F_{300}$ are in our set $\{1, 2, 3, 5, 8\}$, and thus it's still possible that every Fibonacci number has at least one of these as a digit, or if not, that at least all sufficiently large numbers do. In the final section we give some results on related problems, and encourage the reader to explore this and other questions (for example, what fraction of the time is $d$ a digit of $F_n$ as $n \to \infty$?) and let us know what you find!

%%%%%%%%%%%%%%%%%%%%%%%%%%%%%%%%%%%%%%%%%%%%%%%%%%%%%%%%%%%%%%%%%%%%%%%%%%%%%%%%%%%%%%
%%%%%%%%%%%%%%%%%%%%%%%%%%%%%%%%%%%%%%%%%%%%%%%%%%%%%%%%%%%%%%%%%%%%%%%%%%%%%%%%%%%%%%
%%%%%%%%%%%%%%%%%%%%%%%%%%%%%%%%%%%%%%%%%%%%%%%%%%%%%%%%%%%%%%%%%%%%%%%%%%%%%%%%%%%%%%
\section{Proofs}\label{sec3}

We first quote a result due to Jarden concerning the periodic behavior of Fibonacci numbers.

\begin{theorem}[Jarden \cite{jarden}]
\label{thm:jarden}
The last $n\geq 3$ digits of the Fibonacci numbers repeat every $15 \cdot 10^{n-1}$ times.
\end{theorem}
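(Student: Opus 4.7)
The plan is to use the Chinese Remainder Theorem to reduce the computation of the Pisano period modulo $10^n$ to its prime-power factors. Writing $\pi(m)$ for the shortest period of the Fibonacci sequence modulo $m$, the coprimality of $2$ and $5$ gives
\[
\pi(10^n) \;=\; \operatorname{lcm}\bigl(\pi(2^n),\,\pi(5^n)\bigr),
\]
so it suffices to determine these two factors exactly.

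The heart of the argument would be a lifting lemma: if a prime $p$ satisfies $\pi(p^2) \neq \pi(p)$, then $\pi(p^{k+1}) = p \cdot \pi(p^k)$ for every sufficiently large $k$. I would prove this by encoding the recurrence through the Fibonacci matrix $M = \begin{pmatrix} 1 & 1 \\ 1 & 0 \end{pmatrix}$, whose powers satisfy $M^j = \begin{pmatrix} F_{j+1} & F_j \\ F_j & F_{j-1} \end{pmatrix}$, so that $\pi(p^k)$ is exactly the multiplicative order of $M$ in $\operatorname{GL}_2(\Z/p^k\Z)$. The hypothesis forces $M^{\pi(p^k)} = I + p^k N_k$ with $N_k \not\equiv 0 \pmod p$, and expanding $(I + p^k N_k)^p$ by the binomial theorem yields
\[
M^{p \cdot \pi(p^k)} \;\equiv\; I + p^{k+1} N_k \pmod{p^{k+2}},
\]
pinning down the next period. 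For odd $p$ this holds for all $k \geq 1$; for $p = 2$ the quadratic term $(2^k N_k)^2$ only vanishes modulo $2^{k+2}$ once $k \geq 2$, so I would separately verify the base cases $\pi(2), \pi(4), \pi(8)$ by hand.

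With the lemma in place, short computations give $\pi(2) = 3,\ \pi(4) = 6,\ \pi(8) = 12$ and $\pi(5) = 20,\ \pi(25) = 100$, confirming the lifting hypothesis in both cases and allowing induction to conclude
\[
\pi(2^n) \;=\; 3 \cdot 2^{n-1}, \qquad \pi(5^n) \;=\; 4 \cdot 5^n.
\]
For $n \geq 3$ the $2$-adic valuation of $\pi(2^n)$ is $n - 1 \geq 2$, matching or exceeding the valuation $2$ of the factor $4$ appearing in $\pi(5^n)$, so
\[
\operatorname{lcm}\bigl(3 \cdot 2^{n-1},\, 4 \cdot 5^n\bigr) \;=\; 3 \cdot 2^{n-1} \cdot 5^n \;=\; 15 \cdot 10^{n-1},
\]
as claimed. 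The restriction $n \geq 3$ is genuine: at $n = 2$ the same calculation yields $\operatorname{lcm}(6,100) = 300$, not $150$.

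The main obstacle is the lifting lemma, and within it the case $p = 2$, where the quadratic term in the binomial expansion is not automatically absorbed by the modulus and forces a handful of base-case verifications before the induction begins; the odd-prime case $p = 5$ is comparatively clean because the $\binom{p}{2}$ prefactor contributes an extra factor of $p$ that keeps every higher-order term controlled.
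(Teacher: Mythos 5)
Your proposal is essentially correct, but it is worth noting that the paper does not actually prove Theorem \ref{thm:jarden} at all: it quotes the result from Jarden and instead proves only the much weaker Lemma \ref{lem:pisano}, namely that the Fibonacci numbers modulo any $m$ are periodic with period at most $m^2+1$, via the Pigeonhole Principle applied to consecutive pairs $(F_i, F_{i+1}) \bmod m$. Your argument goes much further: the reduction $\pi(10^n) = \operatorname{lcm}(\pi(2^n),\pi(5^n))$ by the Chinese Remainder Theorem, the identification of $\pi(p^k)$ with the order of $M=\begin{pmatrix}1&1\\1&0\end{pmatrix}$ in $\operatorname{GL}_2(\Z/p^k\Z)$, and the binomial lifting step $M^{\pi(p^k)} = I + p^kN_k$ with $N_k\not\equiv 0 \pmod p$ together yield the exact values $\pi(2^n)=3\cdot 2^{n-1}$ and $\pi(5^n)=4\cdot 5^n$, whence $\operatorname{lcm}(3\cdot 2^{n-1},4\cdot 5^n)=15\cdot 10^{n-1}$ precisely when $n\geq 3$; your observation that $n=2$ gives $300$ correctly explains the restriction in the statement. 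Two small points to make the lifting lemma airtight: you should record that $\pi(p^k)$ divides $\pi(p^{k+1})$ (any period mod $p^{k+1}$ reduces to one mod $p^k$), so that the congruence $M^{p\cdot\pi(p^k)}\equiv I + p^{k+1}N_k \pmod{p^{k+2}}$ together with $N_k\not\equiv 0\pmod p$ forces $\pi(p^{k+1}) = p\cdot\pi(p^k)$ rather than merely dividing it; and for $p=2$, verifying $\pi(2)=3$, $\pi(4)=6$, $\pi(8)=12$ by hand does launch the induction at $k=2$, since $\pi(8)\neq\pi(4)$ is exactly the statement that $N_2\not\equiv 0 \pmod 2$. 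What the two approaches buy is quite different: the paper's pigeonhole lemma is elementary, fits the expository outreach theme, and suffices in principle for a brute-force digit search, but it gives only a crude bound ($m^2+1$) and not the exact period $15\cdot 10^{n-1}$ actually used in Table \ref{comps}; your argument supplies that exact constant, at the cost of machinery (CRT, matrix orders, a lifting-the-exponent style induction with a genuine wrinkle at $p=2$) that the paper deliberately leaves to the reference \cite{jarden}.
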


Rather than prove the above, we'll instead state and prove a simpler, well-known result, and in the interest of encouraging exploration leave extensions to the reader.

\begin{lem}\label{lem:pisano} For any fixed modulus $m$, the Fibonacci numbers modulo $m$ are periodic with period at most $m^2+1$. The period is denoted $\pi(m)$, and called the Pisano period of the Fibonaccis modulo $m$. \end{lem}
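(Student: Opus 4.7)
The plan is to argue by a pigeonhole principle applied to consecutive pairs of Fibonacci residues, together with the reversibility of the Fibonacci recurrence modulo $m$.

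First, I would form the sequence of pairs $P_n := (F_n \bmod m,\ F_{n+1} \bmod m)$ for $n = 1, 2, 3, \ldots$. Because each coordinate lies in $\{0, 1, \ldots, m-1\}$, there are exactly $m^2$ distinct values such a pair can take. Hence among the $m^2 + 1$ pairs $P_1, P_2, \ldots, P_{m^2+1}$, by the pigeonhole principle two must coincide: there exist indices $i < j$ with $j \le m^2 + 1$ and $P_i = P_j$, giving $F_i \equiv F_j \pmod{m}$ and $F_{i+1} \equiv F_{j+1} \pmod{m}$.

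Next, I would upgrade this local coincidence to global periodicity by observing that the recurrence $F_{n+1} = F_n + F_{n-1}$ can be rearranged as $F_{n-1} = F_{n+1} - F_n$, so that any two consecutive residues determine the entire sequence both forward \emph{and} backward modulo $m$. Thus the equalities $F_i \equiv F_j$ and $F_{i+1} \equiv F_{j+1}$ propagate forward (by the usual recursion) to give $F_{i+k} \equiv F_{j+k} \pmod{m}$ for all $k \ge 0$, and they also propagate backward to give $F_{i-k} \equiv F_{j-k} \pmod{m}$ for all $0 \le k \le i-1$. Choosing $k = i - 1$ shows $F_1 \equiv F_{j - i + 1}$ and $F_2 \equiv F_{j - i + 2} \pmod{m}$, so the sequence repeats with period $T := j - i$ starting from the very beginning. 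Since $j \le m^2 + 1$ and $i \ge 1$, we have $T \le m^2$, which is even better than the stated bound of $m^2 + 1$.

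I expect the only subtle point to be the reversibility step: one must be careful that the coincidence of pairs at indices $i$ and $j$ implies periodicity starting from index $1$, not merely from index $i$. This is handled by the backward recursion, which is why passing to \emph{pairs} of residues (rather than single residues) is essential. Once this is in place, the rest is a direct application of pigeonhole, and the bound $\pi(m) \le m^2 + 1$ follows immediately; the named quantity $\pi(m)$ is then well-defined as the least such period.
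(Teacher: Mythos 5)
Your proposal is correct and follows essentially the same route as the paper: a pigeonhole argument on consecutive pairs of residues $(F_n \bmod m,\ F_{n+1} \bmod m)$, followed by propagating the coincidence forward and backward via the recurrence. Your observation that the backward step yields periodicity from the start with period $j-i \le m^2$ matches the paper's reasoning (and is in fact slightly sharper than the stated bound $m^2+1$).
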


\begin{proof} The proof is a simple application of the Pigeonhole Principle, and can be found in many texts.\footnote{Also called the Box Principle, it states that if you have $N+1$ objects and each is assigned to one and only one box, then at least one box gets at least two elements. Many problems can be successfully resolved by such arguments; see for example Lectures 11 and 12 of the eighth named author's problem solving class from 2017, \bburl{https://web.williams.edu/Mathematics/sjmiller/public_html/331Sp17/index.htm} (in particular \bburl{https://youtu.be/xx8f8z8zzfQ} and \bburl{https://youtu.be/AA0ajr-foUw}), Appendix A.4 of \cite{MT-B}, and a plethora of problems that can be found from a simple web search.} Consider the Fibonacci numbers modulo $m$ in pairs: $$(F_0 \bmod m, F_1 \bmod m), \ \ \ (F_1 \bmod m, F_2 \bmod m), \ \ \ (F_2 \bmod m, F_3 \bmod m), \ \ \ \dots.$$ Each of these pairs must be in the set $\{(a, b): 0 \le a, b \le m-1\}$, and clearly this set has exactly $m^2$ elements. If we look at the first $m^2+1$ pairs of Fibonacci numbers modulo $m$, at least two of them must be the same. Thus there are indices $i < j$ in $\{0, 1, \dots, m^2\}$ such that $$(F_i \bmod m, F_{i+1} \bmod m) \ = \ (F_j \bmod m, F_{j+1} \bmod m).$$  As each Fibonacci number is the sum of the previous two, we have $$F_{i+2} \bmod m  \ = \  (F_{i+1} + F_i) \bmod m \ = \ (F_{j+1} + F_j) \bmod m \ = \ F_{j+2} \bmod m.$$ Therefore not only do we have $F_i = F_j \bmod m$ and $F_{i+1} = F_{j+1} \bmod m$ but we also have $F_{i+2} = F_{j+2} \bmod m$; a similar argument shows that we can keep marching down and $F_{i+3} = F_{j+3} \bmod m$ and so on. We can also extend this argument backwards to $F_{i-1} = F_{j-1} \bmod m$, and so on and so on. Thus the sequence is periodic with period a divisor of $j-i$, which is at most $m^2+1$ (the period is this large if $i$ and $j$ are as far apart as possible, with $i = 0$ and $j = m^2$.
\end{proof}

It's an interesting question to determine how the period depends on $m$. When $m=2$ the period is 3 as $0, 1, 1, 2, 3, 5, 8, 13, 21, \dots$ modulo 2 become $0, 1, 1, 0, 1, 1, 0, 1, 1, \dots$, and we see the odd, odd, even pattern remarked on earlier. For $m=3$ we find the Fibonacci numbers reduce to $$0,\ 1,\ 1,\ 2,\ 0,\ 2,\ 2,\ 1,\ 0,\ 1,\ 1,\ 2,\ 0,\ 2,\ 2,\ 1,\ 0,\ 1,\ 1,\ 2,\ 0,\ 2,\ 2,\ 1,\ 0,\ 1,\ 1,\ \dots$$ and thus the period is 8. Note trying to determine the period as a function of $m$ is a great exercise; one can use this as an opportunity to teach spreadsheets (Microsoft Excel, Google Sheets) by having the first column be the Fibonacci numbers and subsequent ones being reductions modulo various numbers. What data should we gather? Should we look at all $m$ one at a time, doing all of them in order? Should we restrict to $m$ prime? Or perhaps we take $m$ to be powers of a fixed prime? For example, the periods are 3, 6 and 12 for the first three powers of 2 (namely 2, 4 and 8) while it's 8, 24 and 72 for the powers of 3 (3, 9 and 27). Do these patterns continue for these powers? Do they hold in general? Much is known about these periods (see for example \cite{HM, Wa, We, Wr}); we are deliberately not stating the full results here to encourage exploration, as with an open mind you might ask something new! \\ \

\begin{comment}
list = {};
For[n = 0, n <= 30, n++,
 list = AppendTo[list, Mod[Fibonacci[n], 3]] ]
Print[list]
\end{comment}

Returning to our main problem, we explore the last $n$ digits. By Theorem \ref{thm:jarden}, if for some $n > 3$ the first $15\cdot 10^{n-1}$ Fibonacci numbers contain a digit in $S$, then all Fibonacci numbers contain a digit in $S$. The results of this are shown in Table \ref{comps}.

\begin{table}[h]
    \centering
    \begin{tabular}{c|c|c}
      $S$   &  Do first $15\cdot 10^{n-1}$  terms contain a digit in $S$? & $n$ checked\\
      \hline

       $\{0, \dots, 9\} \setminus \{1\}$\   &  \ \ \ \ no (disregarding $F_1, F_2 = 1$) &\ 8\\

       $\{0, \dots, 9\} \setminus \{2\}$\   &  \ yes (disregarding $F_3 = 2$) &\ 3\\

       $\{0, \dots, 9\} \setminus \{3\}$\   &  no (disregarding $F_4 = 3$) &\ 8\\

       $\{0, \dots, 9\} \setminus \{4\}$\   &  \hspace{-9.5em}yes & \ 3\\

      $\{0, \dots, 9\} \setminus \{5\}$\  &\ \ \ \ \ \ \ \ \ \ \ \ \ no (disregarding $F_5 = 5$, $F_{10} = 55$) &\ 8\\

      $\{0, \dots, 9\} \setminus \{6\}$\    &  \hspace{-9.5em}yes & \ 5 \\

      $\{0, \dots, 9\} \setminus \{7\}$\ & \hspace{-9.75em}no & 10\\

      $\{0, \dots, 9 \} \setminus \{8\}$\ & no (disregarding $F_6 = 8$) &\ 8\\

      $\{0, \dots, 9 \} \setminus \{2,4\}$ & no (disregarding $F_3 = 2$) & 10\\

      $\{0, \dots, 9 \} \setminus \{2,6\}$ & no (disregarding $F_3 = 2$) & 10\\

      $\{0, \dots, 9 \} \setminus \{4,6\}$ & \hspace{-9.75em}no & 10\\

    \end{tabular}
    \vspace{1em}
    \caption{Result from computing $15\cdot 10^{n-1}$ terms of Fibonacci sequence mod $10^{n-1}$ for various $S$. We say ``disregarding'' to mean that our result holds for all Fibonacci numbers except for the choice of Fibonacci number labeled disregarded. Notice that the only disregarded Fibonacci numbers are all single-digit Fibonacci numbers save for 55, which is the only multi-digit Fibonacci number whose decimal digits are all the same (see \cite{L}).}
    \label{comps}
\end{table}

As a result of the computations in Table \ref{comps}, we can conclude that all Fibonacci numbers contain:
\begin{itemize}
    \item a digit in $\{0,\dots, 9\} \setminus \{6\}$,
    \item a digit in $\{0,\dots, 9\} \setminus \{4\}$,
    \item a digit in $\{0,\dots, 9\} \setminus \{2\}$ (disregarding $F_2=2$).
\end{itemize}

\ \\

We also have an alternate proof in the case $S = \{1, \dots, 9\} \setminus \{6\}$; we encourage you to modify it for the other digits and compare with the proof in \cite{L}.

\begin{proposition}
    No Fibonacci number can be written in base 10 using only the digit $6$.
\end{proposition}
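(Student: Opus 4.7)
The plan is to use a single modular obstruction modulo $16$, with the two shortest repdigits of $6$'s handled by direct inspection. Let $R_k := \underbrace{66\cdots 6}_{k\text{ digits}} = 6\cdot (10^k-1)/9$ denote the repdigit we want to rule out.

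First, I would dispose of the small cases. By inspection $R_1 = 6$ and $R_2 = 66$ are not Fibonacci numbers, since the Fibonacci sequence jumps from $5$ to $8$ and from $55$ to $89$. So it remains to rule out $F_n = R_k$ with $k \ge 3$.

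Next I would compute $R_k \pmod{16}$ for $k \ge 3$. Since $10^4 = 625 \cdot 16 \equiv 0 \pmod{16}$, every summand $6 \cdot 10^j$ with $j \ge 4$ vanishes, and only the last four digits of $R_k$ contribute. A direct check gives $R_3 = 666 \equiv 10 \pmod{16}$ and $R_4 = 6666 \equiv 10 \pmod{16}$, and hence $R_k \equiv 10 \pmod{16}$ for every $k \ge 3$.

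On the Fibonacci side, Lemma~\ref{lem:pisano} guarantees that $F_n \bmod 16$ is periodic; iterating the recurrence shows the Pisano period is $24$, with residues
\[
1,\ 1,\ 2,\ 3,\ 5,\ 8,\ 13,\ 5,\ 2,\ 7,\ 9,\ 0,\ 9,\ 9,\ 2,\ 11,\ 13,\ 8,\ 5,\ 13,\ 2,\ 15,\ 1,\ 0.
\]
Inspection shows the residue $10$ is absent---the values attained form the set $\{0,1,2,3,5,7,8,9,11,13,15\}$---so no Fibonacci number is ever $\equiv 10 \pmod{16}$. Combined with the computation of $R_k \bmod 16$, no Fibonacci number can equal $R_k$ for any $k \ge 3$, completing the proof.

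The only real design choice---and the only place the argument could fail---is the choice of modulus. Naive candidates are inadequate: modulo $8$, the repdigit residue stabilizes to $2$, which \emph{is} a valid Fibonacci residue, and modulo $11$ the repdigit residue depends on the parity of $k$ and forces further case work. Modulus $16$ is the first modulus for which both (i) the repdigit residue has stabilized (because $10^4 \equiv 0$) and (ii) the stable value $10$ is a residue that Fibonacci numbers systematically avoid, which is what lets the argument close cleanly with no case split on $n$ or $k$.
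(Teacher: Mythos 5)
Your proof is correct and takes essentially the same route as the paper's: a modular obstruction modulo a power of $2$ under which every sufficiently long repdigit of $6$'s is congruent to $10$ while no Fibonacci number ever is, with the remaining short repdigits excluded by inspection; the paper works modulo $2^5 = 32$ using the crude Pisano bound from Lemma~\ref{lem:pisano}, while you work modulo $16$ with the exact period $24$. Your choice of modulus is a mild sharpening (only $6$ and $66$ need separate checking rather than $6, 66, 666, 6666$), but the underlying idea is identical.
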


\begin{proof}
    The Fibonacci sequence mod $2^5$ is periodic, with period at most $2^{10}$ by Lemma \ref{lem:pisano}. By checking the first $2^{10}$ Fibonacci numbers, we discover that none of them are congruent to $10 \bmod{2^5}$; thus, no Fibonacci numbers are congruent to $10 \bmod{2^5}$. Since 66666 is equivalent to 10 modulo $2^5$ and no Fibonacci number is equivalent to 10 modulo $2^5$, we see that the only possible Fibonacci numbers ending with all 6's are $6666$, $666$, $66$, and $6$. A quick calculation shows none of these are Fibonacci numbers, and we have therefore proved that no Fibonacci number can be written using only the digit $6$.
\end{proof}

\ \\

\ \\
\end{document}